\xpatchcmd{\bibsection}{*}{}{}{} 
\setlist[enumerate,1]{label={(\alph*)}}
\DeclareSymbolFont{largesymbolsstix}{LS2}{stixex}{m}{n}
\DeclareMathDelimiter{\lbrbrak}{\mathopen}{largesymbolsstix}{"EE}{largesymbolsstix}{"14}
\DeclareMathDelimiter{\rbrbrak}{\mathclose}{largesymbolsstix}{"EF}{largesymbolsstix}{"15}
\newcommand{\genlegendre}[4]{%
  \genfrac{(}{)}{}{#1}{#3}{#4}%
  \if\relax\detokenize{#2}\relax\else_{\!#2}\fi
}
\newcommand{\WIP}[1]{\scalebox{#1}{\begin{tikzpicture}[limb/.style={line cap=round,line width=1.5mm,line join=bevel}]
\draw[line width=2mm,rounded corners,fill=yellow] (-2,0) -- (0,-2) -- (2,0) -- (0,2) -- cycle;
\fill (1.5mm,7mm) circle (1.5mm);
\fill(0,-7.5mm) -- ++(10mm,0mm) -- ++(120:2mm)--++(100:1mm)--++(150:2mm) arc (70:170:2.5mm and 1mm);
\draw[limb] (-7.5mm,-6.5mm)--++(70:4mm)--++(85:4mm) coordinate(a)--++(-45:5mm)--(-2.5mm,-6.5mm);
\fill[rotate around={45:(a)}] ([shift={(-0.5mm,0.55mm)}]a) --++(0mm,-3mm)--++
        (7mm,-0.5mm)coordinate(b)--++(0mm,4mm)coordinate(c)--cycle;
\draw[limb] ([shift={(-0.6mm,-0.4mm)}]b) --++(-120:5mm) ([shift={(-0.5mm,-0.5mm)}]c) --++
        (-3mm,0mm)--++(-100:3mm)coordinate (d);
\draw[ultra thick] (d) -- ++(-45:1.25cm);
\end{tikzpicture}}}
\definecolor{cec1d24}{RGB}{236,29,36}
\definecolor{cffffff}{RGB}{255,255,255}
\let\int\relax\DeclareMathOperator*{\int}{int}
\newcommand{\lra}{\longrightarrow}
\newcommand{\ra}{\rightarrow}
\newcommand{\bA}{\mathbb{A}}
\newcommand{\bC}{\mathbb{C}}
\newcommand{\bG}{\mathbb{G}}
\newcommand{\bP}{\mathbb{P}}
\newcommand{\bQ}{\mathbb{Q}}
\newcommand{\bZ}{\mathbb{Z}}
\newcommand{\cE}{\mathcal{E}}
\newcommand{\cM}{\mathcal{M}}
\newcommand{\cO}{\mathcal{O}}
\newcommand{\cX}{\mathcal{X}}
\newcommand{\fc}{\mathfrak{c}}
\newcommand{\fh}{\mathfrak{h}}
\newcommand{\fD}{\mathfrak{D}}
\newcommand{\fF}{\mathfrak{F}}
\newcommand{\Aut}{\operatorname{Aut}}
\newcommand{\Out}{\operatorname{Out}}
\newcommand{\Gal}{{\operatorname{Gal}}}
\newcommand{\Hom}{\operatorname{Hom}}
\newcommand{\Conf}{\operatorname{Conf}}
\newcommand{\SL}{\operatorname{SL}}
\newcommand{\PSL}{\operatorname{PSL}}
\newcommand{\PGL}{\operatorname{PGL}}
\newcommand{\rightiso}{\stackrel{\sim}{\longrightarrow}}
\newcommand{\Pic}{\operatorname{Pic}}
\newcommand{\Spec}{\operatorname{Spec}}
\newcommand{\pr}{\operatorname{pr}}
\newcommand{\Aff}{\operatorname{Aff}}
\theoremstyle{definition}\newtheorem{defn}{Definition}[section]
\theoremstyle{remark}
\theoremstyle{remark}
\theoremstyle{remark}\newtheorem{remark}[defn]{Remark}
\theoremstyle{plain}
\theoremstyle{remark}\newtheorem*{remark*}{Remark}
\theoremstyle{remark}
\theoremstyle{remark}
\theoremstyle{definition}
\theoremstyle{remark}
\theoremstyle{def inition}
\theoremstyle{plain}
\theoremstyle{plain}
\theoremstyle{plain}\newtheorem{prop}[defn]{Proposition}
\theoremstyle{plain}\newtheorem{thm}[defn]{Theorem}
\theoremstyle{plain}
\theoremstyle{plain}\newtheorem{lemma}[defn]{Lemma}
\theoremstyle{plain}\newtheorem{cor}[defn]{Corollary}
\theoremstyle{plain}
\theoremstyle{plain}\newtheorem*{thm*}{Theorem}
\theoremstyle{plain}\newtheorem*{conj*}{Conjecture}
\theoremstyle{plain}\newtheorem*{prop*}{Proposition}
\theoremstyle{plain}
\theoremstyle{plain}
\newtheorem{maintheorem}{Theorem}
\newtheorem{maincor}[maintheorem]{Corollary}
\newcommand{\para}[1]{\noindent \textbf{#1.}}
\title[Families of elliptic curves over the four-pointed configuration space]{Families of elliptic curves over the four-pointed configuration space and exceptional sequences for the braid group on four strands}
\newcommand{\mbf}[1]{\mathbf{#1}}
\newcommand{\bfD}{\mbf{D}}
\newcommand{\bfE}{\mbf{E}}
\newcommand{\bfP}{\mbf{P}}
\newcommand{\bfQ}{\mbf{Q}}
\newcommand{\wt}[1]{\widetilde{#1}}
\renewcommand{\int}{\operatorname{int}}
\author{William Y. Chen}
\author{Nick Salter}
\address{William Y. Chen: Department of Mathematics, Rutgers University, Hill Center for the Mathematical Sciences, 110 Frelinghuysen Rd, Piscataway, NJ 08854-8019}
\email{oxeimon@gmail.com}
\address{Nick Salter: Department of Mathematics, University of Notre Dame, 255 Hurley Building, Notre Dame, IN 46556}
\email{nsalter@nd.edu}
\date{February 16, 2024}
\begin{document}

\maketitle

\begin{abstract}
    We show that the configuration space of four unordered points in $\mathbb{C}$ with barycenter 0 is isomorphic to the space of triples $(E,Q,\omega)$, where $E$ is an elliptic curve, $Q\in E^\circ$ a nonzero point, and $\omega$ a nonzero holomorphic differential on $E$. At the level of fundamental groups, our construction unifies two classical exceptional exact sequences involving the braid group $B_4$: namely, the sequence $1\rightarrow F_2\rightarrow B_4\rightarrow B_3\rightarrow 1$, where $F_2$ is a free group of rank 2, related to Ferrari's solution of the quartic, and the sequence $1\rightarrow \mathbb{Z} \rightarrow B_4\rightarrow\operatorname{Aut}^+(F_2)\rightarrow 1$ of Dyer-Formanek-Grossman\cite{DFG82}.
\end{abstract}

\section{Introduction}

Let $\Conf_n = \Conf_n(\bC)$ denote the configuration space of ordered $n$-tuples of distinct points in $\bC$, and let $\Conf_{[n]} = \Conf_{[n]}(\bC)$ denote the quotient under the free action of $S_n$ by permuting coordinates. I.e. $\Conf_{[n]}$ is the space of configurations of $n$ unordered distinct points, or equivalently the space of monic squarefree polynomials of degree $n$. It is a classical fact that for squarefree $f(X)$ of degree $n$, the equation $Y^2 = f(X)$ defines a hyperelliptic curve of genus $\lfloor \frac{n-1}{2} \rfloor$.

For $n = 3,4$, such curves have genus $1$, and so become elliptic curves when endowed with a choice of basepoint. For $f(X) = X^3 + a_2X^2 + a_4X + a_6$ of degree $n = 3$, the curve $E_f$ given by $Y^2 = f(X)$ is totally ramified over $\infty$, and so $\infty$ provides a consistent choice of distinguished point for this family of genus $1$ curves over $\Conf_{[3]}$. Moreover, the differential $\omega_f := -\frac{dX}{2Y}$ is holomorphic and nonzero, and hence the map $f\mapsto (E_f,\omega_f)$ defines a $\bG_a$-torsor $\Conf_{[3]}\ra\lambda^\circ$, where $\bG_a\cong\bC$ is the group of translations, and $\lambda^\circ$ is the complement of the zero section in the Hodge line bundle on the moduli stack of elliptic curves $\cM_{1,1}$ ($\lambda^\circ$ is an algebraic variety whose points correspond to isomorphism classes of pairs $(E,\omega)$, where $\omega$ is a nonzero holomorphic differential on $E$). This map induces the well-known sequence of groups
$$B_3\cong\pi_1(\Conf_{[3]})\cong\pi_1(\lambda^\circ)\lra\pi_1(\cM_{1,1})\cong\SL_2(\bZ)\twoheadrightarrow\PSL_2(\bZ)$$
which identifies the braid group $B_3$ with the universal central extension of the modular group $\PSL_2(\bZ)$. These maps moreover make sense integrally over $\Spec\bZ[1/2]$ \cite[\S2.2]{KM85}. The composite $\Conf_{[3]}\ra\lambda^\circ\ra\cM_{1,1}$ was also studied in recent work of Huxford--Schillewaert \cite{huxford}, where it was shown to play a key role in the classification of holomorphic maps on $\Conf_{[3]}$.

The purpose of this paper is describe an analogous geometric construction in the case $n = 4$, which refines the above situation for $n = 3$. Let $\cE^\circ$ denote the universal elliptic curve over $\cM_{1,1}$. The fiber product $\cE^\circ\times_{\cM_{1,1}}\lambda^\circ$ is an algebraic variety whose points correspond to triples $(E,Q,\omega)$, where $E$ is an elliptic curve with origin $O$, $Q\in E - O$ is a nonzero point, and $\omega$ is a nonzero holomorphic differential on $E$. Let $\Aff$ denote the stabilizer of $\infty\in\bP^1$ under the action of $\PGL_2(\bC)$, and let $\bG_a\cong\bC$ denote the subgroup of translations.

\begin{maintheorem}\label{thm_A} There is an isomorphism of algebraic varieties
\[\alpha : \Conf_{[4]}/\bG_a\rightiso\cE^\circ\times_{\cM_{1,1}}\lambda^\circ\]
Moreover, the isomorphism $\alpha$ extends to an isomorphism of varieties over $\bZ[1/2]$.
\end{maintheorem}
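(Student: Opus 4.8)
The plan is to construct $\alpha$ together with an explicit inverse $\beta$ by exploiting the classical ``quartic model'' of a genus-one curve. To define $\alpha$, I would start from a monic squarefree quartic $f$ (equivalently, an unordered $4$-point configuration) and form the genus-one curve $C_f : Y^2 = f(X)$. Since $\deg f = 4$ is even, $C_f$ is unramified over $X = \infty$, so in the coordinates $(u,v) = (1/X,\, Y/X^2)$ the two points over infinity are the rational sections $\infty_\pm = (0, \pm 1)$ (here one uses that the leading coefficient is $1$ and $1 \neq -1$). Declaring $O := \infty_+$ the origin makes $C_f$ an elliptic curve $E_f$; the second section $Q_f := \infty_-$ is then a canonical nonzero point, and $\omega_f := dX/Y$ is a nowhere-vanishing holomorphic differential (a direct computation in the $(u,v)$ chart gives $\omega_f = \mp\,du$ at $\infty_\pm$). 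One checks that $\alpha : f \mapsto (E_f, Q_f, \omega_f)$ is a morphism, that it is invariant under the translation action $X \mapsto X + t$ of $\bG_a$ (which fixes both sections at infinity and fixes $\omega_f$), and hence descends to $\Conf_{[4]}/\bG_a \to \cE^\circ \times_{\cM_{1,1}} \lambda^\circ$. A dimension count ($3 = 3$) confirms this is the right candidate.

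For the inverse, given a triple $(E,Q,\omega)$ I would reconstruct the quartic from the involution $\sigma : P \mapsto Q - P$, which exchanges $O$ and $Q$ and whose four fixed points (the solutions of $2P = Q$) are the would-be branch points. The quotient $\pi : E \to E/\langle \sigma\rangle \cong \bP^1$ identifies $O$ and $Q$ to a single point $\infty$, and $\pi^*$ identifies the two-dimensional space $L(O+Q)$ with functions on $\bP^1$ having at most a simple pole at $\infty$; a generator modulo constants supplies a coordinate $X$ on $E/\langle\sigma\rangle$. The differential $\omega$ enters precisely to rigidify the scaling: since $\sigma^*\omega = -\omega$ while $\sigma^* X = X$, the function $Y := dX/\omega$ is $\sigma$-anti-invariant, so $Y^2$ descends to a degree-four polynomial $f$ in $X$, and a local analysis at $\infty$ (where $Y \sim \pm X^2$ for the correctly scaled $X$) forces $f$ to be monic after a unique rescaling of $X$. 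The remaining additive ambiguity in $X$ is exactly the $\bG_a$-translation, so $\beta : (E,Q,\omega) \mapsto [f]$ is well defined on $\Conf_{[4]}/\bG_a$.

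That $\alpha$ and $\beta$ are mutually inverse is then essentially formal, with the coordinate $X$ as the linchpin: starting from $f$, the original $X$-coordinate on $C_f$ is $\sigma$-invariant with simple poles at $\infty_\pm$ and satisfies $dX/\omega_f = Y$, recovering $f$; conversely the quotient construction realizes $E$ as $Y^2 = f(X)$ with $O, Q, \omega$ matching $\infty_+, \infty_-, dX/Y$. In practice I expect the cleanest verification of both the isomorphism and its inverse to come from writing everything in a $\omega$-normalized Weierstrass model $Y^2 = 4X^3 - g_2 X - g_3$ with $Q = (x_0,y_0)$, turning $\alpha$ and $\beta$ into explicit polynomial maps between the coefficients $(p,q,r)$ of $f = X^4 + pX^2 + qX + r$ and $(g_2, g_3, x_0, y_0)$; the denominators that appear are powers of $2$ and the (invertible) discriminant, which is what makes the $\bZ[1/2]$ statement transparent.

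I expect the bulk of the work, and the main obstacle, to lie in upgrading this pointwise/analytic picture to an isomorphism of \emph{schemes}, and in doing so over $\bZ[1/2]$. Concretely, one must carry out the inverse construction in families: construct $X$ via relative Riemann--Roch and cohomology-and-base-change for the line bundle $\cO(O+Q)$, form the quotient $E/\langle\sigma\rangle$ as a $\bP^1$-bundle, and perform the $\omega$-normalization, all while keeping $2$ invertible. The integrality hypothesis enters at each use of the factor $2$: in separating $\infty_+$ from $\infty_-$, in the étaleness of $[2] : E \to E$ that makes the four fixed points of $\sigma$ an étale divisor, and in forming the degree-two quotient. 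Over $\bZ[1/2]$ each of these is unobstructed, so the mutually inverse morphisms $\alpha, \beta$ extend the analytic isomorphism to an isomorphism of $\bZ[1/2]$-schemes.
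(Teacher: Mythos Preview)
Your construction of $\alpha$ is exactly the paper's (compare their explicit equations in \S2.5), and your set-theoretic inverse via the involution $\sigma:P\mapsto Q-P$ is precisely the $\iota_{Q/2}$ appearing in the discussion after Theorem~A. So on the level of ideas your proposal is correct and aligned with the paper.

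The one genuine difference is how the isomorphism is \emph{proved}. You plan to build $\beta$ as an honest morphism in families (via relative Riemann--Roch for $\cO(O+Q)$, forming $E/\langle\sigma\rangle$ as a $\bP^1$-bundle, and $\omega$-normalizing), and you correctly anticipate this as ``the bulk of the work.'' The paper bypasses this entirely: it only constructs $\alpha'$ as a morphism (using the moduli interpretation of $\cX$, so no coordinates are needed), checks $\bG_a$-invariance, and then observes that since $\cX$ is normal and $\Conf_{[4]}/\bG_a$ is integral, Zariski's main theorem reduces the statement that $\alpha$ is an isomorphism to a bijectivity check on $\bC$-points. That bijectivity is exactly the mutually inverse pointwise constructions you wrote down, so no family version of $\beta$ is ever needed. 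The same one-line argument handles $\bZ[1/2]$ once one notes the moduli-theoretic definition of $\alpha'$ makes sense there. Your route would also work and has the merit of producing explicit polynomial formulas between $(p,q,r)$ and $(g_2,g_3,x_0,y_0)$; the paper's route is considerably shorter.
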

At the level of sets, the isomorphism $\alpha$ can be described as follows. Given $\tau = \{a,b,c,d\}\in\Conf_{[4]}$, there is a unique double cover $\pi : E_\tau\ra\bP^1$ branched only above $\{a,b,c,d\}$. We make $E_\tau$ into an elliptic curve by taking the origin $O$ to be \emph{any} lift $\wt{\infty}$ of $\infty\in\bP^1$. The preimages $\tilde{a},\tilde{b},\tilde{c},\tilde{d}\in E_\tau$ form a nontrivial coset under the subgroup $E_\tau[2]$. Let $Q_\tau$ denote their common image under the doubling map $[2] : E_\tau\ra E_\tau$.  Finally, if $z$ denotes the coordinate on $\bP^1$, then we let $\omega_\tau$ be the holomorphic differential whose restriction to $O = \wt{\infty}$ corresponds to the differential $d\frac{1}{z}$ at $\infty\in\bP^1$. We will see in Lemma \ref{lemma_Q} that $Q_\tau$ is just the other point above $\infty$ (distinct from $\wt{\infty}$). In particular, the isomorphism class of $(E_\tau,Q_\tau,\omega_\tau)$ is independent of the choice of $\wt{\infty}$.

To go the other way, given $(E,Q,\omega)$, let $Q/2$ denote the preimage of $Q$ under the doubling map $[2] : E\ra E$; thus $Q/2$ is a nontrivial coset of $E[2]$. Let $\iota_{Q/2}$ denote the unique involution of $E$ fixing $Q/2$ pointwise.\footnote{This is the negation map relative to the group structure on $E$ obtained by choosing any point in $Q/2$ as the origin.} Then by Lemma \ref{lemma_Q}, $\{O,Q\}$ is a free orbit of $\iota_{Q/2}$, and $E/\iota_{Q/2}$ is a curve of genus 0. The set of isomorphisms $E/\iota_{Q/2}\rightiso\bP^1$ sending the orbit $\{O,Q\}$ to $\infty\in\bP^1$ is an $\Aff$-torsor. Since $\Aff/\bG_a = \bG_m$ acts on the cotangent space at $\infty$ by scaling, the subset of isomorphisms which identify $\omega|_O$ with $(d\frac{1}{z})|_\infty$ is a torsor under the subgroup $\bG_a\subset\Aff$.\footnote{Let $u := \frac{1}{z}$. The translation $z\mapsto z+a$ sends $u$ to $\frac{u}{1+au}$ with differential $\frac{du}{1+au}$, which agrees with $du$ at $z = \infty$.} The corresponding $\bG_a$-orbit of images of $Q/2$ in $\bP^1$ is then a point of $\Conf_{[4]}/\bG_a$. One easily checks that the two constructions above are mutually inverse.

The characteristic 0 part of Theorem \ref{thm_A} is proven in \S\ref{ss_mapping_Conf4}. In \S\ref{ss_arithmetic} we describe how to extend $\alpha$ to an isomorphism of schemes over $\bZ[1/2]$, using the moduli-theoretic models of $\Conf_{[4]}$ and $\cE^\circ\times_{\cM_{1,1}}\lambda^\circ$. In \S\ref{ss_equations}, we describe explicit equations for the universal elliptic object over $\Conf_{[4]}$ corresponding to $\alpha$.

Theorem \ref{thm_A} can be specialized to yield two classical exact sequences involving $B_4$, both special to the case $n = 4$. The first of these originates with Ferrari's 1540 solution of the quartic via the method of {\em resolvent cubics}. In modern language, this associates the quartic with roots $\{a,b,c,d\}$ to the cubic with roots $\{ab+cd, ac+bd, ad+bc\}$. This map has the remarkable property that if the original roots are distinct, so too are the roots of the resolvent, so that Ferrari's map restricts to
\[
\fF: \Conf_{[4]} \to \Conf_{[3]}.
\]
See \cite{huxford} for further details. Let $\sigma_1,\ldots,\sigma_{n-1}$ be the standard generators of the braid group $B_n$.  The induced map on fundamental groups $\fF_* : B_4\ra B_3$ sends $\sigma_1,\sigma_2,\sigma_3$ to $\sigma_1,\sigma_2,\sigma_1$ respectively, with kernel $F$ freely generated by $\sigma_1\sigma_3^{-1}$ and its conjugate $\sigma_2\sigma_1\sigma_3^{-1}\sigma_2^{-1}$. Thus we have a sequence
\begin{equation}\label{eq_1}
    1\lra F\lra B_4\lra B_3\lra 1.
\end{equation}
We remark that this sequence provides a lift of the exceptional surjection $S_4 \to S_3$ of symmetric groups to the level of braid groups. 

In \cite{DFG82}, Dyer, Formanek, and Grossman showed, via explicit calculations in $B_4$, that the conjugation action of $B_4$ on the subgroup $F$ described above induces a surjection $B_4\ra\Aut^+(F)$ with kernel the center $Z(B_4)$, where $\Aut^+(F)$ denotes the index 2 subgroup of $\Aut(F)$ inducing determinant 1 automorphisms of $H_1(F,\bZ)\cong\bZ^2$. This leads to an exact sequence:
\begin{equation}\label{eq_2}
    1\lra Z(B_4)\lra B_4\lra\Aut^+(F)\lra 1.
\end{equation}
Here, we recall that for $n\ge 2$ the center $Z(B_n)$ is infinite cyclic, generated by $(\sigma_1\cdots\sigma_{n-1})^n$. This sequence was used in \cite{CLT23} to show that the free group of rank 2 admits infinitely many finite simple groups as quotients with \emph{characteristic} kernel, thus providing infinitely many counterexamples to a conjecture of \cite{Chen18} in the theory of noncongruence modular curves. A main motivation for this paper was to give a geometric interpretation to this sequence. Indeed, both sequences \eqref{eq_1},\eqref{eq_2} can be deduced from Theorem \ref{thm_A} as follows:

\begin{maincor}\label{cor_B} Composing the isomorphism $\alpha$ of Theorem \ref{thm_A} with the projection $\cE^\circ\times_{\cM_{1,1}}\lambda^\circ$ to $\lambda^\circ$ induces, on $\pi_1$, the sequence \eqref{eq_1}. Composing $\alpha$ with the projection to $\cE^\circ$, induces, on $\pi_1$, the sequence \eqref{eq_2}.    
\end{maincor}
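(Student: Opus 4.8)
The plan is to read off both exact sequences from the homotopy long exact sequences of the two projections out of $X := \cE^\circ\times_{\cM_{1,1}}\lambda^\circ$, after transporting everything across $\alpha$. First observe that the translation action of $\bG_a\cong\bC$ on $\Conf_{[4]}$ is free with contractible orbits, so $\Conf_{[4]}\to\Conf_{[4]}/\bG_a$ is a homotopy equivalence and $\pi_1(X)\cong\pi_1(\Conf_{[4]}/\bG_a)\cong B_4$. The two projections $p_1\colon X\to\cE^\circ$ and $p_2\colon X\to\lambda^\circ$ are, respectively, the pullback of the $\bG_m$-torsor $\lambda^\circ\to\cM_{1,1}$ and a fibration with fiber $E^\circ=E\setminus O$; I will extract one sequence from each. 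Throughout I work with orbifold fundamental groups (or pass to a fine level cover of $\cM_{1,1}$ and descend), so that the fibration sequences take the stated form; this is consistent with the identifications $\pi_1(\cM_{1,1})\cong\SL_2(\bZ)$ and $\pi_1(\lambda^\circ)\cong B_3$ recalled above.

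For the projection to $\lambda^\circ$: since $E^\circ$ is homotopy equivalent to a wedge of two circles, $\pi_1(E^\circ)\cong F_2$, and since $\lambda^\circ\simeq\Conf_{[3]}$ is aspherical, the homotopy sequence of $p_2$ reads $1\to F_2\to B_4\to B_3\to 1$. To match this with \eqref{eq_1} it remains to check that $p_2\circ\alpha$ coincides, under $\Conf_{[3]}/\bG_a\cong\lambda^\circ$, with the map induced by Ferrari's resolvent $\fF$. Concretely, for $\tau=\{a,b,c,d\}$ one must show that the pair $(E_\tau,\omega_\tau)$ built from the double cover branched at $\tau$ agrees in $\lambda^\circ$ with the pair attached to the resolvent cubic $g$ with roots $\{ab+cd,\,ac+bd,\,ad+bc\}$. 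This is the classical statement that the Jacobian of $w^2=\prod_i(z-a_i)$ is the elliptic curve $Y^2=\prod_i(X-\theta_i)$ cut out by the resolvent cubic; since $E_\tau$ carries the rational point $O$ it is canonically its own Jacobian, and the normalization of $\omega_\tau$ via $d(1/z)$ should match $\omega_g$ up to the residual $\bG_a$ already quotiented out. The explicit equations of \S\ref{ss_equations} can be used to pin down the differential and the $\bG_a$-equivariance. Granting this, $p_{2*}=\fF_*$, the sequence is \eqref{eq_1}, and in particular $\ker p_{2*}=F$ is exactly the Ferrari--Dyer--Formanek--Grossman subgroup.

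For the projection to $\cE^\circ$: as $p_1$ is a principal $\bG_m$-bundle, the image of $\pi_1(\bG_m)\cong\bZ$ in $B_4$ is central and the homotopy sequence gives a central extension $1\to\bZ\to B_4\to\pi_1(\cE^\circ)\to 1$. The decisive observation is that $p_1$ carries the fiber $E^\circ$ of $p_2$ isomorphically onto the fiber of $\cE^\circ\to\cM_{1,1}$; hence $p_{1*}$ maps the normal subgroup $F=\ker p_{2*}$ isomorphically onto the fiber subgroup $F_2\trianglelefteq\pi_1(\cE^\circ)$, and the conjugation action of $B_4$ on $F$ factors as $\bar c\circ p_{1*}$, where $\bar c\colon\pi_1(\cE^\circ)\to\Aut(F_2)$ is the self-conjugation of $\pi_1(\cE^\circ)$ on its own fiber. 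Now $\bar c$ restricts to the tautological isomorphism $F_2\xrightarrow{\sim}\Inn(F_2)$ on the fiber and induces the standard monodromy $\SL_2(\bZ)\xrightarrow{\sim}\Out^+(F_2)$ of the universal elliptic family on the quotient; comparing the two extensions of $\SL_2(\bZ)$ by $F_2\cong\Inn(F_2)$ via the five lemma shows $\bar c$ is an isomorphism $\pi_1(\cE^\circ)\xrightarrow{\sim}\Aut^+(F_2)$. Consequently $p_{1*}$ is, up to this isomorphism, precisely the conjugation map of \cite{DFG82}.

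It remains to identify the kernel. The central $\bZ=\ker p_{1*}$ lies in $Z(B_4)$; conversely, any automorphism commuting with all of $\Inn(F_2)$ fixes $F_2$ pointwise (as $Z(F_2)=1$) and is therefore trivial, so $Z(\Aut^+(F_2))=1$. Since $p_{1*}$ is surjective, $Z(B_4)$ maps into $Z(\pi_1(\cE^\circ))=1$ and hence lies in $\ker p_{1*}$; the two infinite cyclic groups coincide, giving $\ker p_{1*}=Z(B_4)$ and the sequence \eqref{eq_2}. The main obstacle is the geometric identification of the second paragraph — matching $(E_\tau,\omega_\tau)$ with the resolvent cubic, including the precise differential and the $\bG_a$-normalization — on which both parts ultimately rest, since it is what guarantees that the free group $F$ appearing in the two fibrations is the Ferrari--Dyer--Formanek--Grossman subgroup; everything else is the formal bookkeeping of two fibration sequences sharing $F$, together with the routine but necessary care over the stacky base $\cM_{1,1}$.
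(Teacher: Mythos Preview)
Your treatment of the vertical sequence (the projection to $\cE^\circ$) is essentially the paper's argument: the map is a $\bG_m$-torsor, so the kernel is central, and since $\Aut^+(F_2)$ has trivial center the kernel must be all of $Z(B_4)$. The additional step you carry out---identifying $\pi_1(\cE^\circ)$ with $\Aut^+(F_2)$ via the conjugation action on the fiber---is a nice elaboration, though the paper simply invokes this identification.

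The genuine gap is in your handling of sequence \eqref{eq_1}. You set out to prove that the composite $\Conf_{[4]}/\bG_a\to\lambda^\circ$ literally agrees with Ferrari's resolvent map, by matching $(E_\tau,\omega_\tau)$ with the Weierstrass curve $Y^2=\prod_i(X-\theta_i)$ together with its standard differential $-\frac{dX}{2Y}$. You correctly flag this as ``the main obstacle,'' but then do not resolve it: you only assert that the differential ``should match $\omega_g$ up to the residual $\bG_a$,'' and then write ``granting this.'' That the underlying elliptic curves are isomorphic is indeed classical, but you must exhibit a specific isomorphism and verify it carries $\omega_\tau$ (normalized via $d(1/z)$ at $\infty$) to the differential attached to the resolvent cubic via the $n=3$ construction. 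This is a concrete computation, and nothing in \S\ref{ss_equations} does it for you. As written, the argument for \eqref{eq_1} is incomplete.

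The paper avoids this computation entirely. It observes that the horizontal sequence has the abstract shape $1\to F_2\to B_4\to B_3\to 1$, and then invokes \cite[Theorem 2.8(ii)]{huxford}: \emph{every} homomorphism $B_4\to B_3$ factors through Ferrari's $\fF_*$. Combined with the Hopfian property of $B_3$, any surjection $B_4\twoheadrightarrow B_3$ is $\fF_*$ followed by an automorphism of $B_3$, so the horizontal sequence is isomorphic to \eqref{eq_1} without any geometric matching. Note that the paper thereby proves only isomorphism of exact sequences, not literal equality of the maps---which is all the corollary asserts. Your route, if completed, would yield the stronger statement that the two maps to $\lambda^\circ$ coincide on the nose; but as it stands, that stronger claim is precisely what remains unproved.
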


In the second part of the corollary, we have used the fact that $\pi_1(\cE^\circ)\cong\Aut^+(F)$, where $F$ is understood as the fundamental group of a fiber of the map $\cE^\circ\ra\cM_{1,1}$.

We note that the sequence \eqref{eq_2} can also be understood via Birman--Hilden's theory of the hyperelliptic mapping class group (cf. \cite{birmanhilden71} or \cite[Section 9.4.1]{farbmargalit}) as follows. Birman-Hilden theory gives an isomorphism $B_4/Z(B_4) \cong \text{PMod}(\Sigma_{1,2})$, where $\text{PMod}(\Sigma_{1,2})$ denotes the mapping class group of a surface $\Sigma_{1,2}$ of genus $1$ with $2$ individually-distinguishable marked points. The group $F$ then arises as the ``point-pushing subgroup'', i.e. the kernel of the forgetful map $\text{PMod}(\Sigma_{1,2}) \to \text{Mod}(\Sigma_{1,1})$. By the Teichmuller uniformization of the moduli stacks of marked curves \cite[\S12.1]{FM11}, this forgetful map is induced by the universal family $\cE^\circ\ra\cM_{1,1}$.

\para{Acknowledgements} The authors would like to thank Benson Farb, Pierre Deligne, Aaron Landesman, and Andy Putman for comments on a preliminary draft. The second author is supported by NSF award no. DMS-2153879.

\section{Proof of Theorem A}
For short, let $\cX := \cE^\circ\times_{\cM_{1,1}}\lambda^\circ$. To prove Theorem \ref{thm_A}, we will describe a morphism of algebraic varieties
\[\alpha' : \Conf_{[4]}\lra\cX\]
and verify that it descends to an isomorphism $\alpha : \Conf_{[4]}/\bG_a\rightiso\cX$, where $\bG_a\subset\Aff$ is the subgroup of translations. In the discussion following Theorem \ref{thm_A} we have already described $\alpha,\alpha'$ at the level of sets. To check that they are morphisms of algebraic varieties, instead of choosing coordinates and checking that the map is defined by polynomial equations, we will describe the map in terms of the moduli interpretations of $\Conf_{[4]}$ and $\cX$.

When describing algebro-geometric objects, for simplicity we will by default work over $\bC$. Thus algebraic varieties are complex algebraic varieties, schemes are $\bC$-schemes, etc. However, our methods easily extend to all odd characteristics, see \S\ref{ss_arithmetic}.

\subsection{Configuration spaces}
In this section, we recall the moduli interpretation of $\Conf_n$ and $\Conf_{[n]}$. For $d\ge 1$, let $\bA^d = \Spec\bC[x_1,\ldots,x_d]$ denote affine $d$-space. The space $\Conf_n$ admits a universal family $\Conf_n\times\bA^1\ra\Conf_n$, equipped with $n$ tautological sections $\Sigma_1,\ldots,\Sigma_n$, where $\Sigma_i$ sends $(x_1,\ldots,x_n)$ to $((x_1,\ldots,x_n),x_i)$.

For a scheme $T$, a map $T\ra\Conf_n$ defines, by pullback, $n$ mutually disjoint sections $\sigma_{1},\ldots,\sigma_{n}$ of the relative affine line $\bA^1_T = \bA^1\times T$. Conversely, if $\Delta\subset\bA^n$ denotes the big diagonal consisting of points where at least two coordinates coincide, then for any $n$ mutually disjoint sections $\sigma_1,\ldots,\sigma_n : T\ra\bA^1_T$, their product defines a map $T\ra\Conf_n = \bA^n - \Delta$ which identifies $\sigma_i$ with the pullback of $\Sigma_i$. Thus, we have a canonical bijection
$$\Conf_n(T) := \Hom(T,\Conf_n) \cong \{\text{Mutually disjoint sections $\sigma_1,\ldots,\sigma_n : T\ra\bA^1_T$}\}$$
which is functorial in $T$. When $T = \Spec\bC$, the complex manifold $\bC^n - \Delta$ is thus recovered as $\Conf_n(\Spec\bC)$.

The unordered configuration space $\Conf_{[n]}$ is the quotient of $\Conf_n$ by the free action of $S_n$. In the next proposition we recall the analogous description of $\Conf_{[n]}(T) := \Hom(T,\Conf_{[n]})$. Let $\fD_n(T)$ denote the set of effective divisors $D\subset\bA^1_T$ which are finite \'{e}tale of degree $n$ over $T$.

\begin{prop} For a scheme $T$, there is a bijection $\Conf_{[n]}(T)\rightiso \fD_n(T)$ functorial in $T$.
\end{prop}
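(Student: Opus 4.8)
The plan is to use the concrete description of $\Conf_{[n]}$ as the variety of monic squarefree polynomials. Writing a monic degree-$n$ polynomial as $x^n + c_{n-1}x^{n-1} + \cdots + c_0$ and recalling that squarefreeness is the condition $\disc \neq 0$, the quotient $\Conf_n/S_n$ is identified via elementary symmetric functions with the affine variety $\Spec\bC[c_0,\ldots,c_{n-1}][1/\disc]$, where $\disc$ is the discriminant of the universal polynomial. Because maps to an affine scheme factor through global sections, this gives, for every scheme $T$, a natural identification of $\Conf_{[n]}(T)$ with the set of monic polynomials $f\in\Gamma(T,\cO_T)[x]$ of degree $n$ whose discriminant $\disc(f)$ is a unit. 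With this in hand the Proposition becomes a bijection between such polynomials and the divisors in $\fD_n(T)$, which I would realize through the two mutually inverse assignments $f\mapsto V(f)$ and $D\mapsto f_D$ described below. (Equivalently, one could phrase $f \mapsto V(f)$ as pullback of the universal divisor on $\Conf_{[n]}$, but the explicit chart makes the functor of points immediate.)

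For the forward map, given a monic $f$ of degree $n$ with $\disc(f)\in\Gamma(T,\cO_T)^\times$, I would set $D := V(f)\subset\bA^1_T$, the vanishing locus of $f$. Since $f$ is monic, $\cO_D = \cO_T[x]/(f)$ is free of rank $n$ over $\cO_T$ with basis $1,x,\ldots,x^{n-1}$, so $D\to T$ is finite flat of degree $n$. It remains to see that $D\to T$ is \'etale, equivalently unramified; since $\Omega_{\cO_D/\cO_T}\cong\cO_D/(f')$, this holds precisely when $f'$ is a unit in $\cO_D$, which in turn is detected by its norm $N_{\cO_D/\cO_T}(f')=\pm\disc(f)$ being a unit. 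Thus $V(f)\in\fD_n(T)$, and the assignment is manifestly compatible with base change, hence functorial.

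The substance of the proof is the inverse direction, which amounts to showing that every $D\in\fD_n(T)$ is cut out by a canonical global monic polynomial. Given $D$, finite \'etale and in particular finite locally free of degree $n$ over $T$, the pushforward $\pi_*\cO_D$ is locally free of rank $n$, and multiplication by the coordinate $x$ defines an $\cO_T$-linear endomorphism $\mu_x$ of it. I would take $f_D$ to be the characteristic polynomial $\det(t\cdot\mathrm{id}-\mu_x)$, a globally defined monic polynomial of degree $n$ in $\Gamma(T,\cO_T)[x]$. By Cayley--Hamilton, $f_D$ annihilates $x$ in $\cO_D$, so $(f_D)\subseteq I_D$ and there is a surjection $\cO_T[x]/(f_D)\twoheadrightarrow\cO_D$; as both sides are locally free of rank $n$, and a surjection of finite locally free modules of equal rank is an isomorphism, we get $(f_D)=I_D$ and $D = V(f_D)$. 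Finally, $\disc(f_D)$ is a unit because $D\to T$ is \'etale (by the norm computation of the previous paragraph, run backwards), so $f_D\in\Conf_{[n]}(T)$.

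It remains to check that the two composites are identities and that the construction is functorial. For one composite, the characteristic polynomial of multiplication by $x$ on the companion algebra $\cO_T[x]/(f)$ is $f$ itself, so $f_{V(f)}=f$; for the other, $V(f_D)=D$ was shown above. Functoriality in $T$ holds because both $V(-)$ and the formation of $\pi_*\cO_D$, $\mu_x$, and its characteristic polynomial commute with arbitrary base change $T'\to T$, flatness of $D$ over $T$ guaranteeing that $\pi_*\cO_D$ and $\mu_x$ pull back correctly. The one point demanding genuine care is this globalization step in the inverse direction: over a field a finite subscheme of $\bA^1$ is obviously principal, but in families one must produce the cutting polynomial globally and canonically, and the characteristic-polynomial construction together with the equal-rank surjection lemma is exactly what supplies it.
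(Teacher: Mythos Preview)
Your proof is correct, but it takes a genuinely different route from the paper's. You work in the explicit chart $\Conf_{[n]}=\Spec\bC[c_0,\dots,c_{n-1}][1/\disc]$ and identify both directions with concrete polynomial constructions: the forward map is $f\mapsto V(f)$, and the inverse is the characteristic polynomial of $\mu_x$ on $\pi_*\cO_D$, with Cayley--Hamilton plus the equal-rank surjection lemma closing the loop. The paper instead argues coordinate-free via the $S_n$-torsor $\Conf_n\to\Conf_{[n]}$: given $T\to\Conf_{[n]}$ it pulls back to get $n$ disjoint sections over $P=\Conf_n\times_{\Conf_{[n]}}T$ and descends their union to $\bA^1_T$; conversely, given $D\in\fD_n(T)$ it forms the $S_n$-torsor $P\subset D^n$ (complement of the big diagonal), over which $D$ splits canonically, yielding an $S_n$-equivariant map $P\to\Conf_n$ and hence $T\to\Conf_{[n]}$. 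Your argument is more elementary and makes the polynomial description immediately visible, which dovetails nicely with the paper's opening remark that $\Conf_{[n]}$ is the space of monic squarefree polynomials; the paper's argument avoids choosing coordinates and transparently generalizes to any free finite-group quotient, at the cost of leaving the checks ``finite \'etale of degree $n$'' and ``mutually inverse'' to the reader.
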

\begin{proof} Given a map $T\ra\Conf_{[n]}$, let $P := \Conf_n\times_{\Conf_{[n]}} T$ be the pullback. The projection $P\ra\Conf_n$ defines mutually disjoint sections $\sigma_1,\ldots,\sigma_n$ of $\bA^1_P$. Identifying $\sigma_i$ with its image in $\bA^1_P$, the map $\sqcup_i\sigma_i\subset\bA^1_P\ra\bA^1_T$ is $S_n$-invariant, and hence factors through $(\sqcup_i\sigma_i)/S_n\hookrightarrow\bA^1_T$. This is the desired object of $\fD_n(T)$.

Conversely, for $D\in\fD_n(T)$, let $P$ denote the complement of the big diagonal in the $n$th fibered power $D^n = D\times_T\cdots\times_T D$. The action of $S_n$ on $D^n$ by permuting coordinates makes $P$ into an $S_n$-torsor over $T$, and $D\times_T P$ is a totally split cover of $P$, with canonical sections $\sigma_1,\ldots,\sigma_n : P\ra D\times_T P\subset\bA^1_P$ induced by the projections $\pr_i : P\subset D^n\lra D$. This defines a $S_n$-equivariant map $P\ra\Conf_n$, whence a map $P/S_n = T\ra\Conf_{[n]}$.

One checks that the two constructions defined above are mutually inverse.
\end{proof}

\subsection{The double cover of $\bP^1$ branched over 4 points}

Recall that an elliptic curve over a scheme $T$ is a proper smooth morphism $E\ra T$ equipped with a section $O : T\ra E$. There is a unique group scheme structure on $E/T$ relative to which $O$ is the zero section. When $T$ is smooth and of finite type, such an object can be viewed as a family of genus 1 Riemann surfaces equipped with a holomorphic family of marked points, parametrized by the complex manifold $T(\bC)$.

For short, write $\bfP := \bP^1_{\Conf_{[4]}}$, and let $\bfD\subset\bfP$ be the universal degree 4 divisor, disjoint from $\infty\subset\bfP$. Let $\tau = \{\tau_1,\tau_2,\tau_3,\tau_4\}\in\Conf_{[4]}$ be a base point, and let $\gamma_1,\ldots,\gamma_4\in\pi_1(\bP^1,\infty)$ be a set of standard ``meridional'' generators winding once counterclockwise around $\tau_1,\ldots,\tau_4$ respectively. Since $\Conf_{[n]}$ has trivial higher homotopy groups \cite{FN62}, the fibration $\bfP - \bfD\ra\Conf_{[4]}$ induces a split exact sequence of homotopy groups
\begin{equation}\label{eq_projective_HES}
   \begin{tikzcd}  1\ar[r] & \pi_1(\bP^1 - \tau,\infty)\ar[r] & \pi_1(\bfP - \bfD, (\tau,\infty))\ar[r] & \pi_1(\Conf_{[4]},\tau)\ar[r]\ar[l,bend right = 20, "\infty_*"'] & 1
   \end{tikzcd}
\end{equation}

\begin{prop} The monodromy representation $B_4\cong\pi_1(\Conf_{[4]},\tau)\ra\Out(\pi_1(\bP^1 - \tau,\infty))$ induced by $\infty_*$ preserves the kernel of the map $\fh : \pi_1(\bP^1 - \tau,\infty)\lra\bZ/2$ sending each $\gamma_i\mapsto 1\mod 2$.
\end{prop}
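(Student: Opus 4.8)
The plan is to reduce the statement to the standard fact that a mapping class of the punctured sphere sends each meridian to a conjugate of a meridian, and then exploit that $\fh$ takes values in an abelian group. First I would record the algebraic setup: since $\infty \notin \tau$, the space $\bP^1 - \tau$ is a four-punctured sphere, so $\pi_1(\bP^1 - \tau, \infty)$ is free of rank $3$, generated by the meridians $\gamma_1,\dots,\gamma_4$ subject to the single relation $\gamma_1\gamma_2\gamma_3\gamma_4 = 1$. The homomorphism $\fh$ is well defined precisely because the relation maps to $1+1+1+1 = 4 \equiv 0 \pmod 2$. Its kernel is a normal (index $2$) subgroup, so whether a given automorphism of $\pi_1(\bP^1 - \tau,\infty)$ carries $\ker\fh$ to itself depends only on the class of that automorphism modulo inner automorphisms; hence it makes sense to ask the question for the $\Out$-valued monodromy, and it suffices to verify it for any single lift $\phi \in \Aut(\pi_1(\bP^1 - \tau,\infty))$ of each monodromy element. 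Finally, since $\fh$ and $\fh\circ\phi$ are both surjections onto $\bZ/2$, the inclusion $\phi(\ker\fh) = \ker\fh$ is equivalent to the identity $\fh\circ\phi = \fh$.

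The key geometric input is the identification of the monodromy induced by $\infty_*$ with the mapping class group action. A loop in $\Conf_{[4]}$ based at $\tau$ is realized by an isotopy of $(\bP^1,\tau)$ fixing $\infty$, whose endpoint is an orientation-preserving self-homeomorphism $\varphi$ of $\bP^1$ fixing $\infty$ and permuting the points of $\tau$ according to the underlying permutation $s \in S_4$. The automorphism $\phi = \varphi_*$ of $\pi_1(\bP^1 - \tau,\infty)$ that it induces is, up to the choice of connecting path (i.e. up to inner automorphism), the monodromy image of the braid. Because $\varphi$ is orientation-preserving and takes a small positively-oriented loop around $\tau_i$ to a positively-oriented loop around $\tau_{s(i)}$, it sends each meridian $\gamma_i$ to an element freely homotopic, hence conjugate in $\pi_1$, to the meridian $\gamma_{s(i)}$. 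Thus each lift $\phi$ satisfies $\phi(\gamma_i) = w_i\,\gamma_{s(i)}\,w_i^{-1}$ for some $w_i \in \pi_1(\bP^1 - \tau,\infty)$.

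With this in hand the conclusion is immediate: $\fh$ has abelian target, so it is constant on conjugacy classes, and $\fh(\gamma_j) = 1$ for every $j$, whence $\fh(\phi(\gamma_i)) = \fh(\gamma_{s(i)}) = 1 = \fh(\gamma_i)$. As the $\gamma_i$ generate, this gives $\fh\circ\phi = \fh$, so $\phi$ preserves $\ker\fh$; since this holds for every lift of every monodromy element, the $\Out$-action of $B_4$ preserves $\ker\fh$, as claimed. (Equivalently, one may pass to $H_1(\bP^1 - \tau,\bZ/2)$, on which conjugation acts trivially so that the monodromy factors through the permutation action of $S_4$ on the classes $[\gamma_1],\dots,[\gamma_4]$; the class $\fh \in H^1(\bP^1 - \tau,\bZ/2)$ is then visibly invariant, being the unique nonzero $S_4$-invariant class.) The only point requiring genuine care is the geometric identification in the second paragraph — namely that the monodromy really is the mapping class action and that orientation-preservation forces meridians to map to conjugates of meridians with the correct sign, rather than to their inverses — but once the orientation conventions are fixed this is standard and the algebra is forced.
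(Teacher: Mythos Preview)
Your argument is correct. Both you and the paper ultimately rest on the same observation: each meridian is sent to a conjugate of a meridian, and since $\fh$ takes values in an abelian group it is blind to conjugation. The difference is only in how this observation is reached. The paper simply records the explicit Hurwitz action
\[
\sigma_i(\gamma_i) = \gamma_i\gamma_{i+1}\gamma_i^{-1},\quad \sigma_i(\gamma_{i+1}) = \gamma_i,\quad \sigma_i(\gamma_j)=\gamma_j\ (j\notin\{i,i+1\}),
\]
and then says ``it is easy to see'' that $\ker\fh$ is preserved. You instead argue geometrically, invoking the identification of the monodromy with the mapping class action and the fact that an orientation-preserving homeomorphism sends a small positive loop around a puncture to one around the image puncture. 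Your route is more conceptual and explains \emph{why} the Hurwitz formulas have the shape they do; the paper's route is shorter and avoids any discussion of orientation conventions or isotopies, at the cost of pulling the formulas out of thin air. Either way the algebraic conclusion is the one-liner you gave: $\fh\circ\phi = \fh$ on generators, hence everywhere.
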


\begin{proof} The monodromy representation of $B_4$ on $\pi_1(\bP^1 - \tau,\infty)\cong\langle\gamma_1,\gamma_2,\gamma_3,\gamma_4|\gamma_1\gamma_2\gamma_3\gamma_4 = 1\rangle$ is given by the Hurwitz action
\begin{equation}\label{eq_action}
    \sigma_i(\gamma_i) = \gamma_i\gamma_{i+1}\gamma_i^{-1},\quad\sigma_i(\gamma_{i+1}) = \gamma_i,\;\;\text{and}\;\;\sigma_i(\gamma_j) = \gamma_j\text{ for $j\notin\{i,i+1\}$}.
\end{equation}
It is easy to see that the kernel of $\fh$ is preserved by this action.
\end{proof}

As a corollary, we obtain the following extension of the familiar statement that there is a unique double cover of $\bP^1$ branched over four points.
\begin{cor}\label{cor_unique} For a smooth scheme $T$, let $D\in\fD_n(T)$. There is a double cover\footnote{I.e., a finite flat morphism of degree 2.} $\pi : E\ra\bP^1_T$ branched over $D$, \'{e}tale over $\bP^1_T - D$, and split above $\infty\subset\bP^1_T$. This cover is unique up to isomorphism of covers. If $\wt{\infty}\subset E$ is a choice of section lying over $\infty$, then $(E,\infty)$ is an elliptic curve over $T$.
\end{cor}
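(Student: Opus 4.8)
The plan is to construct the cover first over the universal base $\Conf_{[4]}$ and then pull it back to an arbitrary smooth $T$ along the classifying map $T\ra\Conf_{[4]}$ attached to $D$ by the Proposition identifying $\fD_4(T)$ with $\Conf_{[4]}(T)$. The essential input is the monodromy-invariance of $\ker\fh$ just established. First I would upgrade $\fh$ to the total space: using the split sequence \eqref{eq_projective_HES} to write $\pi_1(\bfP-\bfD,(\tau,\infty))\cong\pi_1(\bP^1-\tau,\infty)\rtimes\pi_1(\Conf_{[4]},\tau)$, the subgroup $\ker\fh$ is normal in the fiber group and preserved by the monodromy, hence normal in the whole group; since $\Aut(\bZ/2)$ is trivial the quotient splits as $\bZ/2\times\pi_1(\Conf_{[4]})$, and projection to the first factor gives a surjection $\wt{\fh}:\pi_1(\bfP-\bfD)\ra\bZ/2$ restricting to $\fh$ on fibers and killing the splitting $\infty_*$. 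By the Riemann existence theorem this corresponds to a connected finite \'{e}tale double cover $\wt{\bfE}^\circ\ra\bfP-\bfD$.

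Next I would complete $\wt{\bfE}^\circ$ to a branched cover by taking the relative normalization $\bfE$ of $\bfP$ in $\wt{\bfE}^\circ$. Because the local monodromy around each component of $\bfD$ is $\fh(\gamma_i)=1\neq 0$, the normalization is simply ramified over $\bfD$, while over the section $\infty$ the cover is \'{e}tale and, as $\wt{\fh}\circ\infty_* = 0$, split. I would then check that $\bfE\ra\Conf_{[4]}$ is smooth and proper with connected genus-$1$ fibers: properness is immediate from the factorization $\bfE\ra\bfP\ra\Conf_{[4]}$ (finite followed by proper); the genus is $1$ by Riemann--Hurwitz for a double cover of $\bP^1$ branched at exactly four points; and smoothness over the base holds because $\bfD\ra\Conf_{[4]}$ is finite \'{e}tale (this is the content of $D\in\fD_4$), so the branch points never collide and the ramification is transverse to the fibers. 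Pulling back along $T\ra\Conf_{[4]}$, and using that relative normalization commutes with this smooth base change, produces the asserted $\pi:E\ra\bP^1_T$; being split over $\infty$ it admits a section $\wt{\infty}$, and $(E,\wt{\infty})$ is then an elliptic curve over $T$.

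For uniqueness I would restrict any candidate double cover to the \'{e}tale locus $\bP^1_T-D$, where it becomes a $\bZ/2$-torsor, and test it fiberwise. On a geometric fiber the hypotheses force the classifying homomorphism $\pi_1(\bP^1-\tau)\ra\bZ/2$ to send every meridian $\gamma_i$ to the nontrivial element --- otherwise the cover would fail to be branched over the corresponding point of $D$ --- so it must equal $\fh$. This pins down the \'{e}tale cover on each fiber, and the branched cover is recovered uniquely as its normalization; rigidity of finite \'{e}tale covers then promotes the fiberwise uniqueness to uniqueness over $T$.

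The main obstacle I anticipate is not the single-fiber topology but the family statement: verifying that the relative normalization $\bfE$ is \emph{smooth} and proper over the base with geometrically connected fibers, and that this persists after pullback to an arbitrary smooth $T$. This is exactly where the hypothesis $D\in\fD_4(T)$ --- equivalently, that $\bfD\ra\Conf_{[4]}$ is finite \'{e}tale --- does the real work, since it rules out any degeneration or collision of branch points that would force a singular fiber.
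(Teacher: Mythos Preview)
Your existence argument is correct and essentially parallels the paper's: build the cover over the universal base via the surjection $\wt{\fh}$ (equivalently, the paper's $\rho$) determined by monodromy-invariance of $\ker\fh$, then pull back along the classifying map $T\to\Conf_{[4]}$.

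The uniqueness argument, however, has a real gap. Fiberwise uniqueness does \emph{not} promote to uniqueness over $T$ by ``rigidity'': the Isom-scheme between two finite \'etale $\bZ/2$-covers of $\bP^1_T-D$ is itself a $\bZ/2$-torsor over $T$, and having a point in every geometric fiber does not produce a global section. Concretely, if $T'\to T$ is any nontrivial \'etale double cover, twisting your $E$ by $T'$ yields a double cover of $\bP^1_T$ still branched exactly over $D$ and isomorphic to $E$ on every geometric fiber, yet not isomorphic to $E$ over $T$; the paper's Remark immediately following the corollary makes precisely this point. What distinguishes $E$ from its twists is the hypothesis you never invoked in your uniqueness paragraph: being split over $\infty$. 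The paper's fix is to run the argument through the split exact sequence $1\to\pi_1(\bP^1-D_t)\to\pi_1(\bP^1_T-D)\to\pi_1(T)\to 1$: the classifying map $\rho$ must equal $\fh$ on the fiber (your argument) and must vanish on $\infty_*(\pi_1(T))$ (split over $\infty$), and since these two subgroups generate, $\rho$ is pinned down. Inserting this one sentence repairs your proof.
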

\begin{proof} The Riemann-Hurwitz formula implies that any such $E$ has genus 1, and hence becomes an elliptic curve when equipped with a choice of section. It remains to show existence and uniqueness. Let $t\in T$ be a geometric point. By \cite[\S XIII, Prop 4.3]{SGA1}, we have a split exact sequence of (\'{e}tale) fundamental groups
\begin{equation}\label{eq_T_HES}
   \begin{tikzcd}  1\ar[r] & \pi_1(\bP^1 - D_t,\infty)\ar[r] & \pi_1(\bP^1_T - D, (t,\infty))\ar[r] & \pi_1(T,t)\ar[r]\ar[l,bend right = 20, "\infty_*"'] & 1
   \end{tikzcd}
\end{equation}
We will view $\pi_1(T,t)$ as embedded inside $\pi_1(\bP^1_T-D,(t,\infty))$. A double cover of $\bP^1_T$ is given by a homomorphism $\rho : \pi_1(\bP_T - D, (t,\infty))\ra\bZ/2$. The cover is split above $\infty$ if and only if $\rho$ restricts to the trivial homomorphism on the right term $\pi_1(T,t)$, and it is branched over $D$ if and only if the restriction to the left term $\pi_1(\bP^1-D_t,\infty)$ is the map $\fh : \pi_1(\bP^1-D,\infty)\ra\bZ/2$ which sends $\gamma_1,\ldots,\gamma_4$ to 1, where $\gamma_1,\ldots,\gamma_4$ denote the standard meridional generators as above. Since the middle term of \eqref{eq_T_HES} is generated by the left and right terms, the desired homomorphism $\rho$ is unique if it exists. To see that it exists, it suffices to note that by the moduli interpretation of $\Conf_{[4]}$, $D\subset\bP^1_T$ is the pullback of $\bfD\subset\bfP$ via a map $T\ra\Conf_{[4]}$, and hence the action of $\pi_1(T,t)$ on $\pi_1(\bP^1-D_t,\infty)$ induced by $\infty_*$ is the pullback of the action \eqref{eq_action}, which preserves the kernel of $\fh$. 
\end{proof}

\begin{remark} The uniqueness part of the corollary is false without the condition that the cover is split above $\infty$. Indeed, the proof shows that there is a bijection between the connected double covers of $\bP^1_T$ branched over $D$ and double covers of $\infty\subset\bP^1_T$. The split double cover is the one described in the corollary. The other double covers give rise to quadratic twists of $E$.
\end{remark}

\begin{remark} The uniqueness part of the corollary is also false without the smoothness assumption on $T$. Indeed, for $T = \Spec\bC[\epsilon]/\epsilon^2$, since deformations of smooth affine schemes are trivial \cite[\S1, Cor 4.8]{HartDT}, the finite \'{e}tale double covers of $\bP^1 - \{a_1,\ldots,a_4\}$ given by $Y^2 = (X-a_1)(X-a_2)(X-a_3)(X-a_4)$ depend only on the residue class of $a_i\mod\epsilon$. On the other hand, the map sending an elliptic curve to its branch locus in $\bP^1$ defines an \'{e}tale map $\cM_{1,1}\ra\cM_{0,4}$, and hence induces an isomorphism on tangent spaces. This implies that the nontrivial deformations of the branch locus $\{a,b,c,d\}$ corresponds to nontrivial deformations of the elliptic curve. To recover uniqueness in this general case, one should additionally assert that \'{e}tale locally near each branch point $a_i$, the cover is given by $\cO_T[x,y]/(y^2-(x-a_i))$ \cite[XIII, Prop 5.5]{SGA1}.
\end{remark}

With notation and hypotheses as in Corollary \ref{cor_unique}, let $O,O'\subset E$ denote the two sections lying above $\infty\subset\bP^1_T$. We view $E$ as an elliptic curve with origin $O$. Let $R\subset E$ be the ramification divisor of $\pi : E\ra\bP^1_T$, viewed as a finite \'{e}tale degree 4 divisor over $T$. Then $R$ is also the fixed point subscheme of an involution on $E$ (generator of $\Gal(\pi)\cong\bZ/2$), and hence is a torsor under the 2-torsion subgroup scheme $E[2]$ acting by translation. Since $R$ does not contain the zero section $O$, $R$ is a nontrivial coset, and hence the multiplication by 2 map $[2] : E\ra E$ sends $R$ to a section $Q\subset E$, disjoint from $O$.
\begin{lemma}\label{lemma_Q} The section $Q := [2]R$ is precisely the other section $O'$ lying over $\infty\subset\bP^1_T$.
\end{lemma}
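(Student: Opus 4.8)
I would first reduce to the case of a geometric fiber. Since both $Q$ and $O'$ are sections of the separated $T$-scheme $E$, their equalizer is a closed subscheme of $T$; as $T$ is smooth, hence reduced, it suffices to prove $Q = O'$ after base change to each geometric point $t \to T$. I may therefore assume $T = \Spec k$ with $k$ algebraically closed, so that $E$ is an elliptic curve over a field with origin $O$, $\pi : E \to \bP^1$ is the double cover, the ramification divisor is $R = \{\tilde a_1,\ldots,\tilde a_4\}$ lying over the four branch points $a_1,\ldots,a_4$, and $\{O,O'\} = \pi^{-1}(\infty)$ is the reduced fiber over $\infty$.

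The main computation is then an application of Abel's theorem. On $\bP^1$ any two closed points are linearly equivalent, so $\infty \sim a_i$ for each $i$; pulling back along $\pi$ preserves linear equivalence, and since $\pi$ is split over $\infty$ but totally ramified over each $a_i$ we have $\pi^*\infty = O + O'$ and $\pi^* a_i = 2\tilde a_i$ as divisors on $E$. Hence
\[
O + O' \sim 2\tilde a_i \qquad (i = 1,\ldots,4).
\]
Both sides are effective of degree $2$, so by Abel's theorem for the group law on $E$ with origin $O$ their group-law sums agree: $O + O' = \tilde a_i + \tilde a_i$, that is $O' = [2]\tilde a_i$, using $O + O' = O'$ since $O$ is the identity. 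As this holds for every ramification point, the doubling map carries all of $R$ to $O'$, which is exactly the assertion $Q = [2]R = O'$.

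The one subtlety I anticipate is the passage from the geometric fibers back to $T$, but this is handled by the separatedness of $E/T$ together with the reducedness of $T$ recorded above, and is precisely where the smoothness hypothesis of Corollary \ref{cor_unique} is used. As a more self-contained alternative avoiding Abel's theorem, one can identify the deck involution directly: the generator $\iota$ of $\Gal(\pi)$ has $R$ as its fixed locus and interchanges $O$ and $O'$, so after composing with translation by $-O'$ it becomes an automorphism of $(E,O)$ of order $2$; since $[-1]$ is the unique involution of $\Aut(E,O)$, this forces $\iota(P) = -P + O'$, whose fixed points $\{P : 2P = O'\}$ are exactly $R$, again giving $[2]R = O'$. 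I expect the bookkeeping in this second route — verifying $\iota(O) = O'$ and that the order-$2$ automorphism is $[-1]$ fiberwise — to be the more delicate part, which is why I would present the linear-equivalence computation as the main line of argument.
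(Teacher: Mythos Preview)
Your proof is correct and follows essentially the same strategy as the paper's: reduce to geometric fibers, establish the linear equivalence $(O)+(O')\sim 2(\tilde a_i)$ on $E$, and read off $[2]\tilde a_i = O'$ via the Abel--Jacobi description of the group law. The only variation is in how that equivalence is obtained---the paper notes that $O$ and $O'$ are inverses in the group law with a ramification point $P$ as origin (so $(O)-(P) = -((O')-(P))$ in $\Pic^0$), whereas you pull back $\infty\sim a_i$ along $\pi$; your alternative route via the deck involution is in fact closer in spirit to the paper's argument.
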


\begin{remark}\label{rem_well_def} In particular, recalling that $\cE^\circ$ is the category of elliptic curves equipped with a nonzero point, this lemma shows that the objects $((E,O),Q)$ and $((E,Q),O)$ of $\cE^\circ$ are (uniquely) isomorphic, the isomorphism given by the nontrivial element of $\Gal(E/\bP^1_T)$. This also shows that the constructions described in the discussion following Theorem \ref{thm_A} are well-defined.
\end{remark}

\begin{proof}
It suffices to check this on geometric fibers. For a geometric point $t\in T$, let $E_t$ be the fiber, and let $P\in R\subset E_t$ be a ramified point. We will check that $[2]P = O_t'$. Recall that if $E_0$ is a genus 1 curve with a point $o\in E_0$, the group law on the elliptic curve $E_0$ with origin $o$ is defined by pulling back the group structure on $\Pic^0(E_0)$ via the Abel-Jacobi isomorphism $E_0\ra\Pic^0(E_0)$ sending $p\mapsto (p) - (o)$, where parentheses denote the divisor associated to a point \cite[\S III, Prop 3.4]{Sil09}. Since $O_t,O_t'$ are negations of each other with respect to the group structure where $P$ is the origin, we have the identity
$$(O_t) - (P) = -((O_t') - (P))\quad\text{in $\Pic^0(E_t)$}$$
Thus $2(P) = (O_t) + (O_t')$, so $2(P) - 2(O_t) = (O_t') - (O_t)$. This last equality says that $[2]P = O_t'$ relative to the group law with origin $O_t$, as desired.
\end{proof}

\subsection{Mapping $\Conf_{[4]}\ra\cX$}\label{ss_mapping_Conf4}
As a moduli stack, the objects of $\cX := \cE^\circ\times_{\cM_{1,1}}\lambda^\circ$ over a scheme $T$ consist of triples $(E,Q,\omega)$, where $E$ is an elliptic curve over $T$ (with origin $O$), $Q$ is a nowhere vanishing section of $E$, and $\omega$ is a nonzero section of $H^0(E,\Omega^1_{E/T})$. Note that since automorphisms of elliptic curves act faithfully on the set of nonzero differentials, such objects have no nontrivial automorphisms, and hence $\cX$ is an algebraic variety. 

Recall that $\Hom(T,\cX)$ is identified with the set of isomorphism classes of objects of $\cX$ over $T$. Thus to give a map $\alpha' : \Conf_{[4]}\ra\cX$, we must describe an object of $\cX$ over $\Conf_{[4]}$.

Let $\bfD\subset\bfP$ be the universal degree 4 divisor, and let $\pi : \bfE\ra\bfP$ be the unique double cover branched over $\bfD$ and split over $\infty$, as in Corollary \ref{cor_unique}. We fix an (arbitrary) lift $\wt{\infty}\subset\bfE$ of $\infty$. Setting $O := \wt{\infty}$, $(\bfE,O)$ is an elliptic curve over $\Conf_{[4]}$. Let $R\subset\bfE$ be the ramification locus. By the discussion preceding Lemma \ref{lemma_Q}, this is a nontrivial coset under $\bfE[2]$. Let $\bfQ\subset\bfE$ be the image of $R$ under $[2] : \bfE\ra\bfE$. Thus $\bfQ$ is disjoint from $O$, and the preimage above $\infty\subset\bfP$ is $\bfQ\sqcup O$.

Next we define the differential form $\omega_\bfE$. First we recall the algebraic analogue of the classical statement that invariant differential forms on a Lie group are determined by their restriction to the origin.

\begin{prop} Let $f : E\ra T$ be an elliptic curve with zero section $O : T\ra E$. Then there is a canonical isomorphism
\begin{equation}\label{eq_push_pull}
    O^*\Omega^1_{E/T}\cong f_*\Omega^1_{E/T}
\end{equation}
\end{prop}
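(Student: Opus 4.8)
The plan is to factor the desired isomorphism through the ``invariant differential'' trivialization of $\Omega^1_{E/T}$, which expresses the relative cotangent sheaf as the pullback of its restriction $\omega := O^*\Omega^1_{E/T}$ along the origin, and then to push forward. Write $f : E\ra T$, let $m : E\times_T E\ra E$ denote the multiplication of the (commutative) group scheme, and let $\pr_1,\pr_2 : E\times_T E\ra E$ be the two projections.

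The core of the argument is to produce a canonical isomorphism $\Omega^1_{E/T}\cong f^*\omega$. First I would consider the automorphism $g : E\times_T E\rightiso E\times_T E$, $(x,y)\mapsto(x,x+y)$, with inverse $(x,z)\mapsto(x,z-x)$. Since $\pr_1\circ g = \pr_1$, the map $g$ is an automorphism of $E\times_T E$ viewed as an $E$-scheme via $\pr_1$, so $g^*$ carries the $\pr_1$-relative cotangent sheaf to itself. By flat base change along the Cartesian square with sides $\pr_1,\pr_2,f,f$, this relative sheaf is canonically $\pr_2^*\Omega^1_{E/T}$; and since $\pr_2\circ g = m$, applying $g^*$ yields a canonical isomorphism $m^*\Omega^1_{E/T}\cong\pr_2^*\Omega^1_{E/T}$ of sheaves on $E\times_T E$. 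Restricting this isomorphism along the section $s = (\id_E, O\circ f) : E\ra E\times_T E$, $x\mapsto(x,O)$ — under which $m\circ s = \id_E$ and $\pr_2\circ s = O\circ f$ — produces the desired isomorphism $\Omega^1_{E/T}\cong f^*\omega$.

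With this in hand, the proposition follows by applying $f_*$ and invoking two standard facts. The projection formula gives $f_* f^*\omega\cong\omega\otimes_{\cO_T}f_*\cO_E$, valid because $\omega$ is a line bundle on $T$; and $f_*\cO_E\cong\cO_T$, which holds since $f$ is proper and flat with geometrically integral fibers (each fiber being a smooth connected genus $1$ curve), with formation commuting with base change. Combining these, $f_*\Omega^1_{E/T}\cong f_* f^*\omega\cong\omega = O^*\Omega^1_{E/T}$, as claimed.

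I expect the main obstacle to be the careful bookkeeping in the second step: verifying that $g$ genuinely commutes with $\pr_1$ so that $g^*$ preserves the $\pr_1$-relative cotangent sheaf, identifying that sheaf with $\pr_2^*\Omega^1_{E/T}$ via base change, and correctly tracking the compositions $m\circ s=\id_E$ and $\pr_2\circ s = O\circ f$ through the pullback along $s$. Everything after the construction of $\Omega^1_{E/T}\cong f^*\omega$ is formal.
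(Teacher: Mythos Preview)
Your proof is correct and follows essentially the same approach as the paper: establish $\Omega^1_{E/T}\cong f^*O^*\Omega^1_{E/T}$ via invariant differentials, then apply $f_*$, the projection formula, and $f_*\cO_E\cong\cO_T$. The only difference is that you unpack the construction of the invariant-differential isomorphism explicitly via the translation automorphism on $E\times_T E$, whereas the paper simply cites \cite[\S4.2, Prop 2]{BLR90} for this step.
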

\begin{proof} Since $E\ra T$ is a smooth group scheme, there is a canonical isomorphism $f^*O^*\Omega^1_{E/T}\cong\Omega^1_{E/T}$ obtained by extending forms defined along $O$ to invariant forms on $E$ \cite[\S4.2, Prop 2]{BLR90}. Applying $f_*$, we have $f_*f^*O^*\Omega^1_{E/T}$. By the projection formula \cite[Prop 5.2.32]{Liu02}, we find
$$f_*\Omega^1_{E/T} \cong f_*f^*O^*\Omega^1_{E/T} = f_*(f^*O^*\Omega^1_{E/T}\otimes\cO_E) = O^*\Omega^1_{E/T}\otimes f_*\cO_E = O^*\Omega^1_{E/T}$$
as desired.
\end{proof}

Let $f : \bfE\ra\Conf_{[4]}$ denote the structure morphism. Since $\pi : \bfE\ra\bfP$ is split above $\infty$, $\pi$ induces an isomorphism $\infty^*\Omega^1_{\bfP/\Conf_{[4]}}\cong\wt{\infty}^*\Omega^1_{\bfE/\Conf_{[4]}}$. Using \eqref{eq_push_pull}, we obtain an isomorphism $\infty^*\Omega^1_{\bfP/\Conf_{[4]}}\cong f_*\Omega^1_{\bfE/\Conf_{[4]}}$. Let $\omega_\bfE\in H^0(\bfE,\Omega^1_{\bfE/\Conf_{[4]}})$ be the differential form corresponding to the global section $d\frac{1}{z}$ of $\infty^*\Omega^1_{\bfP/\Conf_{[4]}}$.

\begin{defn} Let $\alpha' : \Conf_{[4]}\lra\cX := \cE^\circ\times_{\cM_{1,1}}\lambda^\circ$ denote the map defined by the triple $(\bfE,\bfQ,\omega_\bfE)$ described above.
\end{defn}

It follows from Remark \ref{rem_well_def} that $\alpha'$ is independent of the choice of origin $O = \wt{\infty}\subset\bfE$ lying over $\infty\subset\bfP$.

One easily checks that $\alpha'$ is invariant under the action of $\bG_a$, and hence factors through a map $\Conf_{[4]}/\bG_a\ra\cX$. The following theorem completes the proof of the characteristic 0 part of Theorem \ref{thm_A}.

\begin{thm} The map $\alpha : \Conf_{[4]}/\bG_a\ra\cX$ is an isomorphism.
\end{thm}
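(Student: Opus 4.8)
The plan is to exhibit a two-sided inverse $\beta : \cX \to \Conf_{[4]}/\bG_a$ as an honest morphism of varieties, following the set-level recipe given after Theorem \ref{thm_A}, and then verify that $\alpha\circ\beta$ and $\beta\circ\alpha$ are identities. Since $\Hom(T,\cX)$ is the set of isomorphism classes of triples $(E,Q,\omega)$ over $T$, I would define $\beta$ on $T$-points. Given such a triple, form the involution $\iota : E\to E$, $P\mapsto Q-P$, taken relative to the group law with origin $O$. By Lemma \ref{lemma_Q} this is negation about any half-point of $Q$; it fixes the ramification coset $R=Q/2$ pointwise and interchanges the sections $O$ and $Q$. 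Let $C:=E/\iota$ be the quotient, a smooth proper genus-$0$ curve over $T$, let $s:T\to C$ be the common image of $O$ and $Q$, and let $\bar R\subset C$ be the (finite \'etale, degree $4$) image of $R$. This recovers intrinsically the double cover of Corollary \ref{cor_unique}.

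The second step is to rigidify $C$ using $\omega$. The section $s$ exhibits $C$ as a $\bP^1$-bundle with a marked section, and $\omega$ — equivalently, via \eqref{eq_push_pull}, the cotangent framing of $E$ at $O$ transported through $\pi$ — trivializes the relevant cotangent line at $s$. I would check that the $\Aff$-torsor of isomorphisms $C\rightiso\bP^1_T$ sending $s$ to $\infty$ is cut down, by requiring that $\omega|_O$ match $(d\tfrac{1}{z})|_\infty$, to a $\bG_a$-torsor of such isomorphisms, exactly as in the footnotes after Theorem \ref{thm_A}. Under any such isomorphism the image of $\bar R$ is a divisor $D\in\fD_4(T)$, well-defined up to the residual $\bG_a$-action, giving the desired point $\beta(E,Q,\omega)\in(\Conf_{[4]}/\bG_a)(T)$. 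Functoriality in $T$ is built into the construction, so $\beta$ is a morphism. Because $\alpha$ and $\beta$ are morphisms between reduced varieties over $\bC$, it then suffices to check $\alpha\circ\beta=\id$ and $\beta\circ\alpha=\id$ on $\bC$-points, which is the set-level mutual-inverse verification already indicated after Theorem \ref{thm_A} (using Lemma \ref{lemma_Q} and Remark \ref{rem_well_def}).

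The delicate point — the expected main obstacle — is carrying step two out in families: forming $C=E/\iota$ as a relative genus-$0$ curve and, crucially, using $\omega$ to pin down the isomorphism $C\cong\bP^1_T$ only up to $\bG_a$ rather than merely up to $\Aff$, so that $\bar R$ descends to a genuine $T$-point of the quotient $\Conf_{[4]}/\bG_a$.

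To sidestep this, I would instead present a shorter argument and relegate $\beta$ to the conceptual picture. Both $\Conf_{[4]}/\bG_a$ and $\cX$ are smooth irreducible complex varieties of dimension $3$: a dimension count gives $\dim\Conf_{[4]}-1=3$ and $\dim\cE^\circ+\dim\lambda^\circ-\dim\cM_{1,1}=2+2-1=3$. The set-theoretic inverse already shows $\alpha$ is bijective on $\bC$-points. Now a bijective morphism between smooth complex varieties of equal dimension is an isomorphism: an injective holomorphic map of equidimensional complex manifolds is a biholomorphism onto an open subset, hence here a biholomorphism, and its inverse is regular by Zariski's main theorem since both varieties are normal. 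I would lead with this bijective-morphism argument as the primary proof, keeping the functorial $\beta$ above as the motivating construction.
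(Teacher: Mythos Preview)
Your proposal is correct, and your ``shorter argument'' is essentially the paper's proof: the paper simply observes that $\cX$ is normal and $\Conf_{[4]}/\bG_a$ is integral, so by Zariski's main theorem it suffices to verify bijectivity on points, which was already done in the discussion following Theorem~\ref{thm_A}. Your detour through injective holomorphic maps is unnecessary---the algebraic form of Zariski's main theorem applies directly to the morphism $\alpha$---and the explicit construction of $\beta$ as a morphism in families, while a valid alternative and conceptually the ``right'' inverse, is not carried out in the paper and is not needed.
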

\begin{proof} Since $\cX$ is normal and $\Conf_{[4]}/\bG_a$ is integral, by Zariski's main theorem \cite[Cor 4.4.6]{Liu02} it suffices to check this at the level of points, but this follows from the discussion following Theorem \ref{thm_A}.
\end{proof}

\subsection{Extension over $\bZ[1/2]$}\label{ss_arithmetic}
Our construction of $\alpha : \Conf_{[4]}/\bG_a\rightiso\cX := \cE^\circ\times_{\cM_{1,1}}\lambda^\circ$ is easily seen to extend to a morphism of stacks over $\bZ[1/2]$, whose definition is exactly the same as in characteristic 0. Our arguments also easily extend, essentially word for word, to show that this extension of $\alpha$ is an isomorphism. Here we make some remarks.

The sequence \eqref{eq_T_HES} is no longer exact if $T$ is not a $\bQ$-scheme. However, the same reference \cite[\S XIII.4]{SGA1} describes how to fix this by taking appropriate quotients of the fundamental groups. Since Corollary \ref{cor_unique} uses this sequence to study double covers, we are okay as long as 2 is invertible on $T$.

In characteristic $p = 2$, one issue is that branched double covers are wildly ramified, and since wild ramification contributes more to arithmetic genus than tame ramification \cite[II, Theorem 5.9]{Sil09}, the cover $E\ra\bP^1_T$ constructed in Corollary \ref{cor_unique} would not be elliptic. Another issue is that the doubling map $[2]$ is not 4-to-1 in characteristic 2.

In characteristic $p = 3$, there is a subtlety that nontrivial automorphisms of elliptic curves $E$ can have nonzero fixed points on $H^0(E,\Omega^1_E)$. This happens only for the supersingular elliptic curve $E : y^2 = x^3-x$ of $j$-invariant $0 = 1728$ and automorphism group the nonabelian semidirect product $\bZ/3\rtimes\bZ/4$ \cite[Exercise A.1]{Sil09}. The kernel of $\Aut(E)$ acting on $H^0(E,\Omega^1_E)$ is the normal subgroup $\bZ/3$, generated by $(y,x)\mapsto (y,x+1)$, which acts on $E$ without nonzero fixed points. Thus, $\lambda^\circ$ is not a scheme above $E$, but $\cE^\circ$, and hence $\cX$, is still a scheme. Indeed, $\Conf_{[4]}/\bG_a$ is a scheme, and (the extension of) $\alpha$ is an isomorphism of schemes.

\subsection{Explicit equations for $(\bfE,\bfQ,\omega_\bfE)$}\label{ss_equations}
If $a,b,c,d$ are the tautological coordinates on $\Conf_4$, then an explicit equation for $\bfE$ is given by the elliptic curve whose affine equation is:
\begin{equation}\label{eq_explicit_quartic_new}
    y^2 = (x-a)(x-b)(x-c)(x-d)
\end{equation}
and the covering map to $\bP^1$ given by $(x,y)\mapsto x$. This equation visibly has the desired branching behavior above $\bfP$. It follows from standard calculations (see, e.g., \cite[\S II.2]{Sil09}), that an affine neighborhood around infinity is given by
\begin{equation}\label{eq_affine_patch}
    w^2 = (1-av)(1-bv)(1-cv)(1-dv)
\end{equation}
where the gluing maps are given by $v = 1/x$, $w = y/x^2$, and the map to $\bfP$ is given by $(v,w)\mapsto 1/v$. It follows that $\{O,O'\}$ correspond, in some order, to $\{(0,1),(0,-1)\}$ in $(v,w)$-coordinates, where by remark Lemma \ref{lemma_Q}, $O' = \bfQ$. In particular, the smooth compactification of \eqref{eq_explicit_quartic_new} is split above $\infty\subset\bfP$. Thus, by the uniqueness part of Corollary \ref{cor_unique}, \eqref{eq_explicit_quartic_new} is indeed an equation for $\bfE$.

The holomorphic differential $\frac{dx}{y}$ on the $(x,y)$-patch becomes $-\frac{dv}{w}$ on the $(v,w)$-patch. Since $d\frac{1}{z}$ on $\bP^1$ pulls back to $dv$, we must have $\omega_\bfE = \pm\frac{dv}{w} = \mp\frac{dx}{y}$, the sign corresponding to the sign of $O = (0, \pm 1)$.


\section{Proof of Corollary B}
In this section we check that sequences \eqref{eq_1} and \eqref{eq_2} can indeed be obtained from our main isomorphism $\alpha$. Since $\cE^\circ,\cM_{1,1},\lambda^\circ$ are all orbifold $K(\pi,1)$ spaces, we find that $\pi_1(\cE^\circ \times_{\cM_{1,1}} \lambda^\circ)$ is a group-theoretic pullback of the projection maps $\pi_1(\cE^\circ) \to \pi_1^{orb}(\cM_{1,1})$ and $\pi_1(\lambda^\circ) \to \pi_1^{orb}(\cM_{1,1})$.

By Theorem \ref{thm_A}, after identifying the various fundamental groups involved, we obtain the following pullback diagram:
\begin{equation}\label{eq_square}
\begin{tikzcd}
    B_4 \ar[r] \ar[d] & B_3 \ar[d] \\
    \Aut^+(F_2) \ar[r]& \SL_2(\bZ).
\end{tikzcd}
\end{equation}
The long exact sequence in homotopy for the bundles $\cE^\circ$ and $\lambda^\circ$ over $\cM_{1,1}$ yield the following short exact sequences of fundamental groups:
\begin{eqnarray*}
        &1\lra Z(B_3)^2 \lra B_3\lra \SL_2(\bZ)\lra 1 \\
        &1\lra F_2\lra \Aut^+(F_2)\lra \SL_2(\bZ)\lra 1.
\end{eqnarray*}
Here, recall that $Z(B_3)$ is infinite cyclic, with quotient $B_3/Z(B_3)\cong\PSL_2(\bZ)$, so by $Z(B_3)^2$ we mean the unique subgroup of index 2. By properties of the pullback, the commutative square \eqref{eq_square} can be extended to the following diagram, with all rows and columns exact:
\begin{equation}\label{eq_bigtikz}
\begin{tikzcd}
         &                          & 1 \ar[d]                  & 1 \ar[d]          &   \\
         &                          & Z(B_4) \ar[d] \ar[r, "\cong"]  & Z(B_3)^2 \ar[d]        &   \\
1 \ar[r] & F_2 \ar[r] \ar[d, "\cong"] &    B_4 \ar[r] \ar[d]      & B_3 \ar[d] \ar[r] & 1 \\
1 \ar[r] & F_2 \ar[r]               &\Aut^+(F_2) \ar[r]\ar[d]   & \SL_2(\bZ) \ar[r] \ar[d] & 1 \\
         &                          & 1                         &1                  &
\end{tikzcd}
\end{equation}
To prove Corollary \ref{cor_B}, it remains to check that the sequences in \eqref{eq_bigtikz} passing through $B_4$ coming from our main theorem coincide with the sequences \eqref{eq_1} and \eqref{eq_2} of Ferrari and Dyer-Formanek-Grossman, respectively. We shall refer to the two sequences in \eqref{eq_bigtikz} passing through $B_4$ as the ``vertical'' and ``horizontal'' sequences.

The sequence \eqref{eq_2} of Dyer-Formanek-Grossman has kernel the center of $B_4$. Since the vertical sequence comes from a $\bG_m$-torsor, it has central kernel. Since $\Aut^+(F_2)$ has trivial center, it follows that the kernel is the full center, and hence the vertical sequence is isomorphic to the sequence \eqref{eq_2}.

Finally, recall that Ferrari's map $\fF_* : B_4\ra B_3$ is given by sending $\sigma_1,\sigma_2,\sigma_3$ to $\sigma_1,\sigma_2,\sigma_1$ respectively. By \cite[Theorem 2.8(ii)]{huxford}, \emph{every} homomorphism $B_4\ra B_3$ factors through $\fF_*$. Since braid groups are Hopfian, it follows that the surjection $B_4\ra B_3$ in the horizontal sequence is the composition of $\fF_*$ with an automorphism of $B_3$. In particular, the horizontal sequence is isomorphic to Ferrari's sequence \eqref{eq_1}.




\bibliography{references}

\end{document}